\theoremstyle{plain}
\theoremstyle{plain}\newtheorem{theorem}{Theorem}[section]
\theoremstyle{plain}\newtheorem{lemma}[theorem]{Lemma}
\theoremstyle{plain}
\theoremstyle{plain}\newtheorem{proposition}[theorem]{Proposition}
\theoremstyle{plain}\newtheorem{remark}{Remark}[section]
\newcommand{\R}{\mathbb{R}}
\newcommand{\be}{\begin{equation}}
\newcommand{\ee}{\end{equation}}
 \newcommand{\ba}{\begin{aligned}}
 \newcommand{\ea}{\end{aligned}}
  \newcommand{\ben}{\begin{enumerate}}
   \newcommand{\een}{\end{enumerate}}
\newcommand{\Rmnum}[1]{\expandafter\@slowromancap\romannumeral #1@}
\begin{document}
\title{An Extension of Riesz Transform}

\author[H. Yu, Q. Jiu]{Huan Yu$^{1}$, Quansen Jiu$^{2}$}

\address{$^1$  School of Applied Science, Beijing Information Science and Technology University, Beijing, 100192, P.R.China}

\email{yuhuandreamer@163.com}

\address{$^2$ School of Mathematical Sciences, Capital Normal University, Beijing, 100048, P.R.China}

\email{jiuqs@cnu.edu.cn}

\date{\today}
\subjclass[2000]{ 42B20, 42B37, 35Q35}
\keywords{Riesz transform, singular integral, Surface quasi-geostrophic equation}

\begin{abstract}

In this paper,  we  consider  the following singular integral
\begin{equation*}
T_jf(x)=K_j*f(x), K_j(x)=\frac{x_j}{|x|^{n+1-\beta}},
\end{equation*}
where $x\in \R^n,  0\le\beta<n, j=1,2,\cdots, n$. When $\beta=0$, it corresponds to the Riesz transform. We will make an estimate the $L^q (1<q<\infty)$ norm of $T_jf$,  which holds  uniformly for $0\le\beta<\frac{n(q-1)}{q}$. In particular, when $\beta=0$, the strong $(q,q)$ type estimate of the Riesz transform for $1<q<\infty$ is recovered from the obtained estimate.

\end{abstract}
\smallskip
\maketitle



\section{Introduction and Main Results}

Given  $f(x)\in L^1(\R^n)\cap L^q(\R^n)$ with $ 1\le q<\infty$, we consider the singular integral
\begin{equation}\label{T-operator}
T_jf(x)=K_j*f(x), K_j(x)=\frac{x_j}{|x|^{n+1-\beta}}, 0<\beta<n, j=1,2,\cdots, n.
\end{equation}
Let $\hat{f}$ be the Fourier transform of $f$  defined as
\begin{equation*}
\hat f(y)=\int_{\R^n} e^{2\pi ix\cdot y}f(x) dx.
\end{equation*}
Then the Fourier transform of $T_jf$ is
\begin{equation}\label{T-operator2}
\widehat{T_jf}(\xi)=\gamma_\beta\frac{\xi_j}{|\xi|^{\beta+1}},  0<\beta<n, \gamma_\beta=i\pi^{n/2-\beta}\frac{\Gamma(\frac{\beta+1}{2})}{\Gamma(\frac{n+1-\beta}{2})},
\end{equation}
Formally, when $\beta=0$, $T_jf$ is the well-known Riesz transform. In fact, it holds (see \cite{[stein]})
$$
\lim_{\beta\to 0+}\int_{\R^n} \frac{x_j}{|x|^{k+n-\beta}}\hat{\varphi}(x) dx=\lim_{\epsilon\to 0+}\int_{|x|\ge\epsilon} \frac{x_j}{|x|^{k+n}}\hat{\varphi}(x) dx,
$$
where $\varphi(x)\in \mathcal{S}(\R^n)$ which is the Schwartz space.  In view of the Riesz potential estimate (see \cite{[stein]}), it is direct to deduce that, for $1<p< q<\infty$,
\begin{equation}\label{E931}
\|T_jf\|_q\le \|\int_{\R^n}\frac{|f(y)|}{|x-y|^{n-\beta}}\,dy\|_q\le C(\beta)\|f\|_p,  \frac 1q=\frac 1p-\frac \beta n, 0<\beta<n.
\end{equation}
However, the constant $C(\beta)$ on the right side of \eqref{E931} depends on $\beta$ in general and  is unbounded as $\beta\to 0$.
A natural question is whether one can  obtain an uniform $L^q-$ estimate of $T_jf$ with respect to $\beta>0$ such that  the strong $(q,q)$ type estimate of the Riesz transform  can be recovered when $\beta\to 0$.  We will answer this question in this paper.

  Here and in what follows, $\|f\|_q$ means the $L^q(\R^n)$ norm of $f$. To simplify the presentation, we omit subscript of $T_j$ and $K_j$ and write \eqref{T-operator} as
\begin{equation}\label{T-operator1}
Tf(x)=K*f(x), K(x)=\frac{x_j}{|x|^{n+1-\beta}}, 0<\beta<n
\end{equation}
for any $j=1,2,\cdots, n$.

 Then  our main result  can be  stated as
\begin{theorem}\label{uni-est3}
Let $f\in L^1(\R^n)\cap L^q(\R^n)$, $1< q <\infty$. Then there exists a constant $C=C(n)$ independent of $\beta$ such that
\begin{equation}\label{T1-111-}
\|Tf\|_q\leq C(\|f\|_q+\|f\|_p+\frac{\beta^{\frac{n(q-1)}{q}}}{(n(q-1)-\beta q)^{\frac 1q}}\|f\|_1)
\end{equation}
for  $0\le \beta<\frac{n(q-1)}{q}$ and $1<p\le q<\infty$ satisfying $\frac 1q=\frac 1p-\frac{\beta}{n}$. Consequently, it holds
\begin{equation}\label{T1-110}
\|Tf\|_q\leq C(\|f\|_q+L(\beta)\|f\|_1)
\end{equation}
for $0\le \beta<\frac{n(q-1)}{q}$, where $L(\beta)=\frac{\beta^{\frac{n(q-1)}{q}}}{(n(q-1)-\beta q)^{\frac 1q}}+\frac{\beta q}{(q-1)n}$.
\end{theorem}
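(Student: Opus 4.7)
The plan is to decompose the kernel $K$ spatially at a scale $R=R(\beta)$, applying Young's inequality to the far piece and combining the $L^q$-boundedness of the Riesz transform with the Hardy--Littlewood--Sobolev inequality on the near piece. I would write $Tf=T_{\mathrm{in}}f+T_{\mathrm{out}}f$, where $T_{\mathrm{in}}f=(K\chi_{|y|\le R})\ast f$ and $T_{\mathrm{out}}f=(K\chi_{|y|>R})\ast f$.

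For the far piece, Young's inequality gives $\|T_{\mathrm{out}}f\|_q\le\|K\chi_{|y|>R}\|_q\|f\|_1$, and a direct polar-coordinates computation (convergent precisely under the hypothesis $\beta<n(q-1)/q$) yields
\[
\|K\chi_{|y|>R}\|_q \le \frac{C_n\, R^{-(n(q-1)-\beta q)/q}}{(n(q-1)-\beta q)^{1/q}}.
\]
Choosing $R=1/\beta$ would make the numerator $\beta^{(n(q-1)-\beta q)/q}=\beta^{n(q-1)/q}\cdot\beta^{-\beta}$, and since $\beta^{-\beta}=e^{-\beta\log\beta}\le e^{1/e}$ on $(0,n(q-1)/q)$, this produces exactly the factor $\beta^{n(q-1)/q}/(n(q-1)-\beta q)^{1/q}$ that multiplies $\|f\|_1$ in the conclusion.

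For the near piece, I would isolate the Riesz transform contribution by writing
\[
K\chi_{|y|\le R} = K_0\chi_{|y|\le R} + K_0(|y|^\beta-1)\chi_{|y|\le R}, \qquad K_0(y)=y_j/|y|^{n+1}.
\]
The principal-value convolution with $K_0\chi_{|y|\le R}$ equals $c\,\mathcal R_j f-(K_0\chi_{|y|>R})\ast f$, where $\mathcal R_j$ denotes the $j$th Riesz transform. By the $L^q$-boundedness of $\mathcal R_j$ and Young's on the tail, this contributes at most $C\|f\|_q+C_n\,\beta^{n(q-1)/q}/(n(q-1))^{1/q}\|f\|_1$. For the correction $K_0(|y|^\beta-1)\chi_{|y|\le R}$, on $\{1<|y|\le R\}$ the direct bound $|K_0(|y|^\beta-1)|\le|y|^{\beta-n}$ dominates the convolution by the Riesz potential $I_\beta|f|$ and HLS gives $\|\cdot\|_q\le C\|f\|_p$, while on $\{|y|\le1\}$ I would use the oddness of the kernel together with the elementary inequality $1-|y|^\beta\le\beta\log(1/|y|)$ to reduce to the same type of bound.

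The main obstacle is the contribution on $\{|y|\le 1\}$ of the correction kernel: its size $(1-|y|^\beta)/|y|^n$ is as singular at the origin as the Riesz-transform kernel $|y|^{-n}$ and is not dominated by a Riesz-potential kernel of any positive order, so a naive pointwise bound by $I_\beta|f|$ fails. Producing the $\|f\|_p$ estimate here requires exploiting the Calder\'on--Zygmund cancellation of the odd correction kernel, for example by a still finer spatial split or by a duality argument. Once the three contributions $C\|f\|_q$, $C\|f\|_p$, and $L(\beta)\|f\|_1$ are in hand, the theorem follows by summation with $C=C(n,q)$ independent of $\beta$.
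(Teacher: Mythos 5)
Your decomposition $Tf=T_{\mathrm{in}}f+T_{\mathrm{out}}f$ at scale $R=1/\beta$ reproduces the paper's split $T=T_1+T_2$, and your Young's-inequality bound on the far piece is exactly the paper's Lemma~\ref{uni-est1}. From there, however, your route diverges: you propose to compare the near piece to the genuine Riesz transform by writing $K\chi_{|y|\le R}=K_0\chi_{|y|\le R}+K_0(|y|^\beta-1)\chi_{|y|\le R}$, whereas the paper never isolates $K_0$ at all. Instead it proves directly that $\|\widehat{K_1}\|_\infty\le C$ uniformly in $\beta$ (Lemma~\ref{uni-est21}, a painstaking case analysis in $|y|$ against $\beta$), hence $T_1$ is of type $(2,2)$, and then runs a Calder\'on--Zygmund decomposition of the \emph{function} $f=g+b$ at level $t$, splits $T_1f$ into $T_{11}f=T_1g+T_1b\,\mathrm{I}_{F^*}$ and $T_{12}f=T_1b\,\mathrm{I}_{G^*}$, shows $T_{11}$ is weak $(1,1)$ and strong $(2,2)$ and applies Marcinkiewicz interpolation plus duality to get $(q,q)$, and bounds $T_{12}$ by Young's inequality using the mean-zero property of $b_l$ on each $K_l$.

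The step you flag as ``the main obstacle'' is a genuine gap, and it is the crux of the whole theorem. The correction kernel $K_0(y)(|y|^\beta-1)\chi_{|y|\le 1}$ has size $\sim(1-|y|^\beta)/|y|^n\to|y|^{-n}$ as $y\to 0$, so it is exactly as singular as the Riesz kernel; it lies in no $L^r$ with $r>1$ and admits no pointwise bound by a Riesz-potential kernel, so neither Young's inequality nor Hardy--Littlewood--Sobolev applies to it. Saying it ``requires exploiting the Calder\'on--Zygmund cancellation of the odd correction kernel, for example by a still finer spatial split or by a duality argument'' names the difficulty without resolving it. Carrying this out would mean proving a $\beta$-uniform $L^\infty$ bound on the Fourier transform of that odd kernel (or a $\beta$-uniform H\"ormander condition), which is precisely the content of the paper's Lemma~\ref{uni-est21}; in effect you would have to do the same hard estimate on a different kernel, so there is no shortcut here. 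I would also caution against the casual invocation of ``HLS gives $\|\cdot\|_q\le C\|f\|_p$'' for the annular piece $\{1<|y|\le R\}$: the HLS constant blows up as $\beta\to 0$ (the paper points this out around \eqref{E931}), so one must instead estimate $\||y|^{-n}(|y|^\beta-1)\chi_{\{1<|y|\le R\}}\|_r$ with $1/r=1-\beta/n$ directly, using $|y|^\beta-1\le\beta|y|^\beta\log|y|$, to see the constant is in fact uniformly bounded; this works but needs to be said.
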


\begin{remark}
 It is addressed that the constant $C$ on the right of \eqref{T1-111-} and \eqref{T1-110} does not depend on $\beta$ and hence  the $(p,p)$ type estimate of the Riesz transform can be recovered from  \eqref{T1-111-} and \eqref{T1-110} respectively when $\beta\to 0$.
\end{remark}
To prove  Theorem \ref{uni-est3}, we split the singular integral \eqref{T-operator1} into two parts: the part near the origin denoted by $T_1f$ and the one apart from the origin denoted by $T_2f$. The estimate on $\|T_2f\|_q$  is easy to obtain (see Lemma \ref{uni-est1}).
  The key part is to estimate $\|T_1f\|_q$. We will use  the refined Calderon-Zygmund decomposition to overcome new difficulties  encountered in the estimate of  $\|T_1b\|_q$ (see proof of Lemma  \ref{uni-est32+}). Moreover, we have
\begin{theorem}\label{uni-est30}
Let $f\in L^1(\R^n)\cap L^q(\R^n)$, $1\le q<\infty$. Then there exists a constant $C=C(n)$ independent of $\beta$ such that
\begin{equation}\label{T1-116}
m\{x: |T_1f|>t\}\le C(\frac{\|f\|_1}{t}+\frac{\|f\|^q_1}{t^q})
\end{equation}
for any $t>0$ and $q\ge 1$ satisfying $\frac 1q=1-\frac{\beta}{n}$.
\end{theorem}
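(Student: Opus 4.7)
The plan is to run a Calder\'on--Zygmund decomposition of $f$ at level $t$, paralleling the strategy used in proving Theorem \ref{uni-est3}. Write $f=g+b$ with the standard properties: $|g|\le Ct$ almost everywhere, $\|g\|_1\le \|f\|_1$; and $b=\sum_k b_k$, where each $b_k$ is supported on a cube $Q_k$, has mean zero, satisfies $\|b_k\|_1\le Ct|Q_k|$, and $\sum_k|Q_k|\le \|f\|_1/t$. Then
\[
m\{|T_1 f|>t\}\le m\{|T_1 g|>t/2\}+m\{|T_1 b|>t/2\},
\]
and I will estimate the two pieces separately.

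For the good part, the idea is to apply Chebyshev's inequality at exponent $q$ together with the $L^q$-estimate for $T_1$ furnished by Lemma \ref{uni-est32+}, which at the endpoint regime $1/q=1-\beta/n$ of interest here will take the form $\|T_1 g\|_q^q\le C\|g\|_q^q+C\|g\|_1^q$ with $C=C(n)$ independent of $\beta$. Combined with the interpolation $\|g\|_q^q\le \|g\|_\infty^{q-1}\|g\|_1\le (Ct)^{q-1}\|f\|_1$, this produces $\|T_1 g\|_q^q\le Ct^{q-1}\|f\|_1+C\|f\|_1^q$, and dividing by $(t/2)^q$ yields both $C\|f\|_1/t$ and $C\|f\|_1^q/t^q$---exactly the two terms appearing in the claim.

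For the bad part, set $E=\bigcup_k 2\sqrt{n}\,Q_k$, so $|E|\le C\|f\|_1/t$. Outside $E$, exploit $\int b_k=0$ by writing $T_1 b_k(x)=\int_{Q_k}[K_1(x-y)-K_1(x-c_k)]b_k(y)\,dy$ (with $c_k$ the center of $Q_k$) and using the Lipschitz estimate $|K_1(x-y)-K_1(x-c_k)|\le Cr_k|x-c_k|^{-(n+1-\beta)}$ for $x\notin 2\sqrt{n}\,Q_k$. Integration in $x$ gives $\int_{E^c}|T_1 b_k|\,dx\le Cr_k^\beta\|b_k\|_1\le C\|b_k\|_1$ uniformly in $\beta$ (using $r_k\le R$, where $R$ is the truncation radius defining $T_1$); cubes $Q_k$ of diameter larger than $R$ are absorbed directly into $E$ since $\mathrm{supp}(T_1 b_k)\subseteq Q_k+B_R$ has measure $O(|Q_k|)$ in that regime. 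Summing and applying Chebyshev at exponent $1$ yields $m(E^c\cap\{|T_1 b|>t/2\})\le C\|f\|_1/t$, which is absorbed into the first term of the stated bound.

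The main obstacle is to secure the uniform-in-$\beta$ $L^q$ estimate with $L^1$-tail for $T_1$ at the endpoint $\beta=n(q-1)/q$---precisely the case excluded by the strict inequality $\beta<n(q-1)/q$ of Theorem \ref{uni-est3}. The refined Calder\'on--Zygmund argument that underlies Lemma \ref{uni-est32+} is what furnishes this $\beta$-uniform $L^q$ bound, and the $L^1$-tail appearing there is exactly what produces the $\|f\|_1^q/t^q$ term of the present weak-type estimate. The remaining constants depend only on $n$: the Lipschitz integral $\int_{|z|>2\sqrt{n}\,r_k}|z|^{-(n+1-\beta)}\,dz$ behaves as $r_k^{\beta-1}$ when $\beta<1$ and stays bounded in $r_k$ when $\beta\ge 1$, so combined with the factor $r_k$ from the first-order Taylor expansion it contributes at most $r_k^\beta\le R^\beta$, keeping all constants uniform.
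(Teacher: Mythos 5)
There is a genuine gap in your treatment of the good part. You invoke the estimate
\[
\|T_1 g\|_q^q \le C\|g\|_q^q + C\|g\|_1^q,\qquad C=C(n)\text{ uniform in }\beta,
\]
at the endpoint $1/q = 1 - \beta/n$, and claim it is ``furnished by Lemma \ref{uni-est32+}.'' But Lemma \ref{uni-est32+} requires $1<p\le q$ with $1/q=1/p-\beta/n$, hence $\beta<\frac{n(q-1)}{q}$ strictly, and its $L^1$ coefficient $\frac{\beta^{n(q-1)/q}}{(n(q-1)-\beta q)^{1/q}}$ blows up precisely as $p\to 1$. Your final paragraph identifies this as ``the main obstacle'' and then asserts that the refined Calder\'on--Zygmund argument supplies the endpoint bound anyway, but this is exactly what is not proved: the refined argument produces a coefficient that degenerates at $p=1$, so the asserted uniform endpoint $L^q$ estimate for $T_1$ is not available from the quoted source. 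Since your whole mechanism for generating the $\|f\|_1^q/t^q$ term is this endpoint estimate, the proof does not close as written.

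The paper circumvents this entirely, and the comparison is instructive. Instead of estimating $T_1 g$ in $L^q$, the paper groups $T_{11}f:=T_1 g + T_1 b\,\mathrm{I}_{F^*}$ and establishes a weak $(1,1)$ bound for $T_{11}$ using only the \emph{uniform $L^2$ bound} of Lemma \ref{uni-est21} (valid for all $0<\beta<n$) together with $\|g\|_2^2\le C(n)t\|f\|_1$ and $m(F^*)\le C\|f\|_1/t$; no $L^q$ estimate for the good part is ever needed. The $\|f\|_1^q/t^q$ term comes instead from the bad part $T_{12}f:=T_1 b\,\mathrm{I}_{G^*}$: applying Young's inequality at exponent $q=n/(n-\beta)$ in \eqref{3.17} makes the relevant radial integral $\int_{|z|>\delta_l/2}|z|^{-(n+1-\beta)q}\,dz$ converge with exponent $(n+1-\beta)q-n=\frac{n}{n-\beta}\ge 1$ bounded away from zero, producing a genuinely $\beta$-uniform $(1,q)$-type bound and, via Chebyshev at exponent $q$, the $\|f\|_1^q/t^q$ term. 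Your proof instead takes the bad part in $L^1$ (Chebyshev at exponent $1$), which only yields the $\|f\|_1/t$ piece and forces the burden of producing $\|f\|_1^q/t^q$ onto the good part where you lack the needed estimate. If you replaced your good-part step with the paper's $L^2$/Chebyshev route you would obtain $m\{|T_1 g|>t/2\}\le C\|f\|_1/t$ directly, but then neither part produces the $\|f\|_1^q/t^q$ term; to recover it you would need the Young-at-exponent-$q$ estimate on the bad part, which is precisely what the paper does. A secondary issue: in your bad-part estimate you state the Lipschitz integral ``contributes at most $r_k^\beta\le R^\beta$,'' but the antiderivative carries a prefactor $\frac{1}{|1-\beta|}$; controlling the supremum over $r_k\le R$ of $r_k\int_{r_k}^R r^{\beta-2}\,dr$ uniformly near $\beta=1$ requires an additional observation (the supremum is attained at an interior point and equals $R^\beta\beta^{\beta/(1-\beta)}$, which stays bounded), which your write-up does not supply.
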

Here $m(A)$ means the Lebesgue measure of a set $A\subset \R^n$. When $\beta\to 0$ it concludes that $q=1$ and the weak $(1,1)$ estimate of the Riesz transform can be recovered from \eqref{T1-116}.

The kind of singular integral \eqref{T-operator} or \eqref{T-operator1} appears in  the generalized surface quasi-geostrophic (SQG) equation which reads as
\begin{equation}\label{SQG}
\left\{\ba
&\omega_{t}+u\cdot\nabla \omega =0, ~(x,t)\in \R^{2}\times\R^{+},\\
&u=\nabla^{\perp}(-\Delta)^{-1+\alpha}\omega, \\
&\omega(x,0)=\omega_0.\ea\ \right.
\end{equation}
Here $0\le \alpha\le\frac 12$ and $\nabla^\perp=(-\partial_{x_2}, \partial_{x_1})$.  The unknown functions  $\omega=\omega(x,t)$ and $u=u(x,t)=(u_1(x,t),u_2(x,t))$ are  related by $\eqref{SQG}_2$ which can be expressed as
\begin{equation}\label{Int}
u(x)=\int_{\R^2} \frac{(x-y)^\perp}{|x-y|^{2+2\alpha}}\omega(y) dy.
\end{equation}
Here $x^\perp=(-x_2, x_1)$ and the singular integral \eqref{Int} means the principle value one.  When $\alpha=0$, \eqref{SQG} corresponds to the two-dimensional incompressible Euler equations. In this case, the unknown functions  $\omega=\omega(x,t)$ and $u=u(x,t)$ are the vorticity and the velocity field respectively. When $\alpha=\frac12$, \eqref{SQG} corresponds to the surface quasi-geostrophic (SQG) equation which describes a famous approximation model of the nonhomogeneous fluid flow in a rapidly rotating 3D half-space (see \cite{[CMT]},\cite{[P87]}). When $0<\alpha<\frac12$, it is called the generalized (or modified) SQG equation.
In the case $0<\alpha\le \frac12$, the unknown functions  $\omega=\omega(x,t)$ and $u=u(x,t)$ stand for potential temperature and velocity field respectively. It is noted that when $\alpha=\frac 12$ the relation between $u=u(x,t)$ and  $\omega=\omega(x,t)$ in \eqref{Int} corresponds to the Riesz transform. When $0<\alpha<\frac 12$, the relation \eqref{Int} is completely similar to  the $T$ operator defined in  \eqref{T-operator1}  with $\beta=1-2\alpha$ and $n=2$. It is clear that $\beta$ will vanish as $\alpha\to \frac12$. In \cite{[JYZ]}, we investigate the approximation of the SQG equation by the generalized SQG equation as $\alpha\to \frac 12$. What's more, when $q=p=2$ in Theorem \ref{uni-est3}, the following result has been established in \cite{[JYZ]}:
\begin{proposition}\label{uni-est}
For $f\in L^1(\R^n)\cap L^2(\R^n)$, it holds that
\begin{equation}\label{T1-10}
\|Tf\|_2\leq C(\|f\|_2+\frac{\beta^{\frac n2}}{\sqrt{n-2\beta}}\|f\|_1),\quad \,  \,\,0<\beta<\frac n2
\end{equation}
for some constant $C=C(n)$ independent of $\beta$.
\end{proposition}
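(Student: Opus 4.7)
The natural plan in the $L^2$ setting is to work on the Fourier side via Plancherel's identity, since the symbol of $T$ is explicit. By \eqref{T-operator2} and the pointwise bound $\xi_j^2 \le |\xi|^2$,
\begin{equation*}
\|Tf\|_2^2 \;=\; |\gamma_\beta|^2 \int_{\R^n} \frac{\xi_j^2}{|\xi|^{2\beta+2}}\, |\hat f(\xi)|^2\, d\xi \;\le\; |\gamma_\beta|^2 \int_{\R^n} \frac{|\hat f(\xi)|^2}{|\xi|^{2\beta}}\, d\xi,
\end{equation*}
so the whole problem is to bound the weighted integral on the right by $C(\|f\|_2^2 + \frac{\beta^n}{n-2\beta}\|f\|_1^2)$ with a constant independent of $\beta$. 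A preliminary step is to check that $|\gamma_\beta|$ itself is bounded uniformly on $\beta \in [0, n/2]$, which follows from the continuity and positivity of the Gamma function on the relevant arguments.

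The decisive idea is to split the frequency integral at the $\beta$-\emph{dependent} radius $R=\beta$. On the outer region $\{|\xi|\ge\beta\}$ one has $|\xi|^{-2\beta}\le\beta^{-2\beta}$, and since the function $\beta\mapsto \beta^{-2\beta}$ attains its supremum $e^{2/e}$ on $(0,\infty)$ at $\beta=1/e$, Plancherel yields $\int_{|\xi|\ge\beta}|\xi|^{-2\beta}|\hat f|^2\,d\xi \le C\|f\|_2^2$. On the inner region $\{|\xi|<\beta\}$ I would use $\|\hat f\|_\infty\le\|f\|_1$ together with the polar-coordinate computation
\begin{equation*}
\int_{|\xi|<\beta}\frac{d\xi}{|\xi|^{2\beta}} \;=\; \frac{\omega_{n-1}\,\beta^{n-2\beta}}{n-2\beta} \;\le\; \frac{C\,\beta^{n}}{n-2\beta},
\end{equation*}
where the last inequality uses $\beta^{-2\beta}\le e^{2/e}$ once more. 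Summing the two pieces, taking the square root via $\sqrt{a+b}\le\sqrt{a}+\sqrt{b}$, and absorbing $|\gamma_\beta|$ into the constant produces the claimed bound.

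The one non-routine point is the choice of splitting scale. A naive cutoff at $R=1$ produces $\|f\|_2+\frac{C}{\sqrt{n-2\beta}}\|f\|_1$, whose second term does not vanish in the limit $\beta\to 0$ and therefore fails to recover the Riesz transform's $L^2$ bound. Calibrating the cutoff to $R=\beta$ is precisely what forces the crucial factor $\beta^{n/2}$ to appear; after that, the rest of the proof is essentially Plancherel plus one spherical integral.
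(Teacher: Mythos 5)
Your proof is correct, and it takes a genuinely different route from the paper's. The paper derives Proposition~\ref{uni-est} (as a special case of Theorem~\ref{uni-est3}) by splitting the \emph{kernel} in physical space at the scale $|x|\sim 1/\beta$ into $K=K_1+K_2$: the tail $T_2$ is handled by a crude $L^1\to L^q$ Young-type estimate (Lemma~\ref{uni-est1}), while the truncated piece $T_1$ requires a rather delicate pointwise case analysis of $\widehat{K_1}(y)$ over three ranges of $|y|$ (Lemma~\ref{uni-est21}) to show that the cut-off multiplier is uniformly bounded. You instead exploit the explicit closed-form multiplier $\gamma_\beta\,\xi_j/|\xi|^{\beta+1}$ of the \emph{untruncated} operator and split the \emph{frequency} integral at $|\xi|=\beta$; the outer piece is controlled by Plancherel after the elementary bound $\sup_{\beta>0}\beta^{-2\beta}=e^{2/e}$, and the inner piece by $\|\hat f\|_\infty\le\|f\|_1$ plus one polar integral. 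Your observation that a fixed cutoff $R=1$ would lose the $\beta^{n/2}$ factor and fail to recover the Riesz bound as $\beta\to 0$ is precisely the right calibration point, and the uniform bound on $|\gamma_\beta|$ on $[0,n/2]$ is routine. What your approach buys is a much shorter proof that avoids all the casework of Lemma~\ref{uni-est21}; what it gives up is generality, since Plancherel is intrinsically $L^2$ and, more to the point, the paper still needs the uniform $L^2$ bound on the \emph{truncated} operator $T_1$ (not merely on $T$) to feed the Calder\'on--Zygmund decomposition in the proof of Theorem~\ref{uni-est3}, so Lemma~\ref{uni-est21} cannot be dispensed with for the full result even though it is superfluous for the $q=2$ proposition taken in isolation.
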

Clearly, Proposition \ref{uni-est} is a particular case of Theorem  \ref{uni-est3}.

The paper is organized as follows. In Section 2, we will present some preliminary estimates which will be needed later. The proof of  Theorem \ref{uni-est3} and Theorem \ref{uni-est30} will be given in Sections 3.

\section{Preliminaries}
\setcounter{section}{2}\setcounter{equation}{0}

Let  $\chi(s)\in C_0^\infty(R)$ be the usual smooth cutting-off function which is defined as
$$
\chi(s)=
\left\{
\begin{array}{ll}
1, & |s|\le 1,\\[3mm]
0, & |s|\ge 2,
\end{array}
\right.
$$
satisfying $|\chi'(s)|\le 2$. Let
\begin{equation}\label{Chi-L}
\chi_\lambda(s)=\chi(\lambda s),
\end{equation}
and define
\begin{eqnarray*}
&& T_1f(x)=K_1*f(x), K_1(x)=K(x)\chi_\beta(|x|),\label{T1-operator}\\
&& T_2f(x)=K_2*f(x), K_2(x)=K(x)(1-\chi_\beta(|x|)).\label{T2-operator}
\end{eqnarray*}
 Then it is clear that the operator $T$ in \eqref{T-operator1} can be written as
\begin{equation}\label{T-j}
T=T_1+T_2.
\end{equation}

The following is a $L^q$-estimate of $T_2$:
\begin{lemma}\label{uni-est1}
 There exists an absolute constant $C>0$ independent of $\beta$ such that for any $1<q<\infty$,
\begin{equation}\label{T2-11}
\|T_2f\|_q\leq C\frac{\beta^{\frac{n(q-1)}{q}}}{(n(q-1)-\beta q)^{\frac 1q}}\|f\|_1,  0<\beta<\frac{n(q-1)}{q};
\end{equation}
\end{lemma}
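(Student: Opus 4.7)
The plan is to control $T_2f$ by a standard Young's convolution inequality, exploiting the fact that the cutoff $1-\chi_\beta(|x|)$ forces $K_2$ to be supported away from the origin. Because we pay the price of convolving with an $L^1$ function, this trades regularity at infinity for the smallness factor $\beta^{n(q-1)/q}$.

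First, I would record the support property: by the definition of $\chi$, one has $1-\chi_\beta(|x|)=0$ whenever $\beta|x|\le 1$, so $K_2$ vanishes on $\{|x|<1/\beta\}$. On its support the pointwise bound
\[
|K_2(x)|\;\le\;|K(x)|\;=\;\frac{|x_j|}{|x|^{n+1-\beta}}\;\le\;\frac{1}{|x|^{n-\beta}}
\]
holds. Young's inequality for convolutions then gives $\|T_2f\|_q=\|K_2*f\|_q\le \|K_2\|_q\|f\|_1$, so the whole task reduces to estimating $\|K_2\|_q$.

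Next I would compute $\|K_2\|_q^q$ via polar coordinates on $\{|x|\ge 1/\beta\}$. Writing $\omega_{n-1}$ for the surface area of $\mathbb{S}^{n-1}$, this becomes
\[
\|K_2\|_q^q\;\le\;\omega_{n-1}\int_{1/\beta}^\infty r^{n-1-(n-\beta)q}\,dr,
\]
and the radial integral converges precisely when $(n-\beta)q>n$, that is, exactly on the range $0<\beta<n(q-1)/q$ that appears in the hypothesis. Evaluating the integral yields
\[
\|K_2\|_q\;\le\;\omega_{n-1}^{1/q}\;\frac{\beta^{n(q-1)/q-\beta}}{(n(q-1)-\beta q)^{1/q}}.
\]

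The only mildly delicate point—really the only nontrivial step—is that the exponent of $\beta$ emerging from the direct calculation is $n(q-1)/q-\beta$, whereas the lemma asks for $\beta^{n(q-1)/q}$. The missing factor is $\beta^{-\beta}$, which I would absorb into the absolute constant by observing that $\beta^{-\beta}=\exp(-\beta\ln\beta)$ and that $-\beta\ln\beta$ attains a universal maximum on the admissible interval $0<\beta<n(q-1)/q\le n$ (the maximum of $-s\ln s$ on $(0,1]$ is $1/e$, and on $[1,n]$ the quantity is $\le 0$). Hence $\beta^{-\beta}\le e^{1/e}$, which combined with the previous display produces the stated inequality with a constant depending only on $n$.
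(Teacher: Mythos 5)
Your proof is correct and follows essentially the same route as the paper: bound $|K_2|\le |x|^{-(n-\beta)}$ on its support $\{|x|\ge 1/\beta\}$, apply Young's convolution inequality $\|K_2*f\|_q\le\|K_2\|_q\|f\|_1$, and compute $\|K_2\|_q$ in polar coordinates. You are in fact somewhat more careful than the paper, which passes from the radial integral directly to the stated bound without commenting on the residual factor $\beta^{-\beta}$; your observation that $\beta^{-\beta}\le e^{1/e}$ uniformly on $(0,n)$ is exactly what is needed to absorb it into the constant.
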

\begin{proof}[Proof of Lemma \ref{uni-est1}]
Note that
\begin{equation*}
T_2f(x)=\int_{\R^n}\frac{x_j-y_j}{|x-y|^{n+1-\beta}}(1-\chi_\beta(|x-y|))f(y)\,dy.
\end{equation*}
Then direct estimates give
\begin{equation*}\begin{split}
\|T_2f\|_q&\leq\|\int_{|x-y|\geq\frac{1}{\beta}}\frac{1}{|x-y|^{n-\beta}}|f(y)|\,dy\|_q
\\&\leq\|f\|_1\int_{|x-y|\geq\frac{1}{\beta}}(\frac{1}{|x-y|^{q(n-\beta)}}\,dy)^{\frac1q}
\\&\leq C\frac{\beta^{\frac{n(q-1)}{q}}}{(n(q-1)-\beta q)^{\frac 1q}}\|f\|_1
\end{split}
\end{equation*}
for any $0<\beta<\frac{n(q-1)}{q}$.
\end{proof}

Concerning the operator $T_1$, we first prove that it is of type $(2,2)$, which has been shown in \cite{[JYZ]}. For completeness, we give a sketch of proof here.
\begin{lemma}\label{uni-est21}
 There exists a constant $C=C(n)$ independent of $\beta$ such that
\begin{equation}\label{T1-1}
\|T_1f\|_2\leq C\|f\|_2, 0<\beta< n.
\end{equation}
\end{lemma}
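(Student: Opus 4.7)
Plan: The natural approach is to use Plancherel's theorem,
$$\|T_1f\|_2 = \|\widehat{T_1f}\|_2 = \|\hat K_1\,\hat f\|_2 \le \|\hat K_1\|_\infty\,\|f\|_2,$$
so the whole game reduces to proving the uniform multiplier bound $\|\hat K_1\|_\infty \le C$ with $C$ independent of $\beta\in(0,n)$. Since the cutoff $\chi_\beta(|x|)$ is radial, $K_1(x)$ is odd in $x$, so
$$\hat K_1(\xi) = -i\int_{\R^n} \sin(2\pi\xi\cdot x)\,K_1(x)\,dx,$$
and I will split the analysis into two regimes according to how $|\xi|$ compares to $\beta$.

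In the low-frequency regime $|\xi|\le\beta$, the oscillation scale $1/|\xi|$ is at least as large as the support scale $2/\beta$ of $K_1$, so the linear bound $|\sin u|\le |u|$ is available on the whole support. A direct computation gives
$$|\hat K_1(\xi)|\ \le\ 2\pi|\xi|\int_{|x|\le 2/\beta}\frac{dx}{|x|^{n-\beta-1}}\ \le\ \frac{C\,|\xi|}{\beta+1}\Bigl(\tfrac{2}{\beta}\Bigr)^{\beta+1}\ \le\ \frac{C}{\beta+1}\Bigl(\tfrac{2}{\beta}\Bigr)^{\beta},$$
and the map $\beta\mapsto (2/\beta)^\beta$ is uniformly bounded on $(0,n)$.

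In the high-frequency regime $|\xi|>\beta$, I cut the integral at $|x|=1/|\xi|$. The inner region $|x|\le1/|\xi|$ is handled again by $|\sin u|\le|u|$, yielding a bound of order $|\xi|^{-\beta}/(\beta+1)$, which is uniformly bounded on $|\xi|>\beta$ since $|\xi|^{-\beta}\le\max(1,\beta^{-\beta})\le C$ there. For the outer region $1/|\xi|<|x|\le 2/\beta$, I exploit oscillation: either by integrating by parts once in the direction of $\xi$, or equivalently through the shift identity $\sin(2\pi\xi\cdot x)=-\sin(2\pi\xi\cdot(x+v))$ with $|v|=1/(2|\xi|)$. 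One then picks up a boundary contribution on $|x|=1/|\xi|$ (which is bounded uniformly because $\int_{|x|=1/|\xi|}|K_1|\,dS\lesssim |\xi|^{1-\beta}$, and the $1/|\xi|$ prefactor from IBP gives $|\xi|^{-\beta}$) plus a bulk contribution of the form $|\xi|^{-1}\int|\nabla K_1|$.

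The main technical obstacle is exactly this bulk term: since $|\nabla K_1(x)|\lesssim|x|^{-(n+1-\beta)}$ on its support, the integral $\int_{1/|\xi|}^{2/\beta}r^{\beta-2}\,dr$ is non-uniformly divergent, being critical (logarithmic) near $\beta=1$. To circumvent this, I would combine the IBP step with the annular mean-zero property $\int_{1/|\xi|<|x|\le 2/\beta}K_1(x)\,dx=0$ (a direct consequence of the oddness of $K_1$ and the radiality of the annulus), or use an iterated/dyadic IBP whose constants remain bounded across $\beta=1$. After changing variables $y=|\xi|x$, the scaling collapses the bulk to a dimensionless quantity whose uniform boundedness in $\beta$ follows from standard Bessel-type asymptotics, and combining the four pieces (low frequencies; inner high frequencies; boundary and bulk of outer high frequencies) yields $\|\hat K_1\|_\infty\le C$, completing the proof via Plancherel.
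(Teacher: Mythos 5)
Your high-level plan coincides with the paper's: reduce via Plancherel to the uniform multiplier bound $\|\widehat{K_1}\|_\infty\le C$, split according to the size of $|\xi|$ relative to $\beta$, use $|e^{2\pi i x\cdot\xi}-1|\lesssim|\xi||x|$ (equivalently $|\sin u|\le|u|$) on the low-frequency region and the inner ball $|x|\le1/|\xi|$, and exploit oscillation on the outer annulus via the shift $z=\xi/(2|\xi|^2)$ with $e^{2\pi i\xi\cdot z}=-1$. That roadmap is correct and is precisely what the paper carries out.

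However, there is a genuine gap at exactly the step that carries the content of the lemma. You correctly identify that the naive bulk estimate $|\xi|^{-1}\int|\nabla K_1|\sim\int_{1/|\xi|}^{2/\beta}r^{\beta-2}\,dr$ is critical near $\beta=1$, but you then only list candidate fixes without executing any of them, and none as stated resolves the issue. The ``annular mean-zero'' of $K_1$ on $\{1/|\xi|<|x|\le 2/\beta\}$ does not help: the oscillating factor $\sin(2\pi\xi\cdot x)$ is not constant on that annulus, so the cancellation does not transfer. The claim that a dyadic or iterated integration by parts has constants uniform across $\beta=1$ is exactly the statement that needs a proof, not an assumption. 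And the kernel is odd in $x$ and carries a $\beta$-dependent cutoff, so there is no clean Bessel representation to invoke after the rescaling $w=|\xi|x$. What the paper actually does at this point is concrete: it writes the shifted difference $K_1(x)-K_1(x-z)$, expands by Taylor's theorem, and evaluates the resulting radial integrals $\int_{|z|\le|x-z|<1/\beta}|x-z|^{-(n+k-\beta)}\,dx$ explicitly. The uniformity near $\beta=1$ then comes from the difference of powers $\bigl(|z|^{\beta-1}-(1/\beta)^{\beta-1}\bigr)/(1-\beta)$ being multiplied by the small prefactor $|z|$, which tames the apparent $1/(1-\beta)$ singularity (indeed at $\beta=1$ the integral degenerates to a logarithm, and $|z|\log(1/(\beta|z|))$ is uniformly bounded). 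That cancellation is the actual crux, and none of the mechanisms you gesture at produces it; until you carry out that computation, or an equivalent one, the proof is incomplete.
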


\begin{proof}[Proof of Lemma \ref{uni-est21}]

 To prove \eqref{T1-1}, our main target is to  prove that there exists an absolute constant $C>0$ independent $\beta$ such that
\begin{equation}\label{K1-F}
\|\widehat{K_1}(y)\|_{L^\infty}\leq C, \ \ 0<\beta<n.
\end{equation}
Since $\int_{\mathbb{S}^1} K_1(x) ds=0$ (here $\mathbb{S}^1$ is the unit sphere surface in $\mathbb{R}^n$) and $K_1(x)$ is supported on $|x|\le \frac 2\beta$, we have
\begin{equation}
\widehat{K_1}(y)=\int_{\R^n} e^{2\pi ix\cdot y}K_1(x)\,dx=\int_{|x|\leq\frac{2}{\beta}} (e^{2\pi ix\cdot y}-1)K_1(x)\,dx
\end{equation}
Since \eqref{K1-F} is a pointwise estimate, we will estimate $\widehat{K_1}(y)$ by different values of $y$.  If $|y|<\frac{\beta}{2}$,  it is direct to estimate
\begin{equation}\label{s-1}
\begin{split}
|\widehat{K_1}(y)|&\leq C|y|\int_{|x|\leq \frac 2\beta}|x|\frac{1}{|x|^{n-\beta}}\,dx
\\ &\leq \frac{2^\beta}{\beta+1}\beta^{-\beta}.
\end{split}
\end{equation}

Then there exists an absolute constant $C>0$ such that
\begin{equation}\label{K1-F1}
|\widehat{K_1}(y)|\leq C,  0<\beta<n, \ |y|< \frac{\beta}{2}.
\end{equation}

If $\frac{\beta}{2}\leq |y|\leq\beta,$ we rewrite $\widehat{K_1}(y)$ as
\begin{equation*}\begin{split}
\widehat{K_1}(y)&=\int_{|x|<\frac{1}{|y|}} e^{2\pi ix\cdot y}K_1(x)\,dx+\int_{\frac{1}{|y|}\leq|x|\leq\frac{2}{\beta}} e^{2\pi ix\cdot y}K_1(x)\,dx
\\&=\int_{|x|<\frac{1}{|y|}} (e^{2\pi ix\cdot y}-1)K_1(x)\,dx+\int_{\frac{1}{|y|}\leq|x|\leq\frac{2}{\beta}} e^{2\pi ix\cdot y}K_1(x)\,dx
\end{split}
\end{equation*}
Similar to \eqref{s-1}, it deduces
\begin{equation*}\begin{split}
|\int_{|x|<\frac{1}{|y|}} (e^{2\pi ix\cdot y}-1)K_1(x)\,dx|\leq \frac{2^\beta}{\beta+1}\beta^{-\beta}.
\end{split}
\end{equation*}
Moreover, we have
\begin{equation*}\begin{split}
|\int_{\frac{1}{|y|}\leq|x|\leq\frac{2}{\beta}} e^{2\pi ix\cdot y}K_1(x)\,dx|\leq C\frac{2^\beta-1}{\beta}\beta^{-\beta}.
\end{split}
\end{equation*}

Consequently, there exists an absolute constant $C>0$ such that
\begin{equation}\label{K1-F2}
|\widehat{K_1}(y)|\leq C(\frac{2^\beta}{\beta+1}\beta^{-\beta}+\frac{2^\beta-1}{\beta}\beta^{-\beta})\le C, \ 0<\beta<n, \frac{\beta}{2}\le |y|\le\beta.
\end{equation}

If $ |y|>\beta,$ $\widehat{K_1}(y)$ can be divided into
\begin{equation}\label{K1-F31}
\begin{split}
\widehat{K_1}(y)&=\int_{|x|<\frac{1}{|y|}} e^{2\pi ix\cdot y}K_1(x)\,dx+\int_{\frac{1}{|y|}\leq|x|\leq\frac{2}{\beta}} e^{2\pi ix\cdot y}K_1(x)\,dx
\\&=\int_{|x|<\frac{1}{|y|}} (e^{2\pi ix\cdot y}-1)K_1(x)\,dx+\int_{\frac{1}{|y|}\leq|x|\leq\frac{2}{\beta}} e^{2\pi ix\cdot y}K_1(x)\,dx.
\end{split}
\end{equation}
For the first term on the right hand of the above equality, we obtain
\begin{equation}\label{K1-F311}
\begin{split}
|\int_{|x|<\frac{1}{|y|}} (e^{2\pi ix\cdot y}-1)K_1(x)\,dx|&\leq C|y|\int_{|x|<\frac{1}{|y|}}|x|\frac{1}{|x|^{n-\beta}}\,dx
\\&\leq \frac{1}{\beta+1}\beta^{-\beta}.
\end{split}
\end{equation}
For the second term,
we  choose $z=\frac{y}{2|y|^2}$ with $|z|=\frac{1}{2|y|}<\frac{1}{2\beta}$ such that $e^{2\pi iy\cdot z}=-1$
and
\begin{equation*}
\int_{\R^n} e^{2\pi ix\cdot y}K_1(x)\,dx=\frac12\int_{\R^n} e^{2\pi ix\cdot y}(K_1(x)-K_1(x-z))\,dx,
\end{equation*}
so
\begin{equation}\label{K1-F321}
\begin{split}
\int_{\frac{1}{|y|}\leq|x|\leq\frac{2}{\beta}} e^{2\pi ix\cdot y}K_1(x)\,dx&=\frac12\int_{\frac{1}{|y|}\leq|x|\leq\frac{2}{\beta}} e^{2\pi ix\cdot y}(K_1(x)-K_1(x-z))\,dx
\\&-\frac12\int_{\frac{1}{|y|}\leq|x+z|,~ |x|\leq\frac{1}{|y|}} e^{2\pi ix\cdot y}K_1(x)\,dx
\\&+\frac12\int_{|x+z|\leq\frac{1}{|y|},~ |x|\geq\frac{1}{|y|}} e^{2\pi ix\cdot y}K_1(x)\,dx\\
&+\frac12\int_{|x+z|\ge\frac{2}{\beta}} e^{2\pi ix\cdot y}K_1(x) dx
\\&\equiv I+J+K+L.
\end{split}
\end{equation}

To estimate the term $I$, we have
\begin{equation}\label{F321-I}
\begin{split}
I&=\int_{\frac{1}{|y|}\leq|x|<\frac{1}{\beta},~|x-z|\leq\frac{1}{\beta}}(\frac{x}{|x|^{n+1-\beta}}-\frac{x-z}{|x-z|^{n+1-\beta}})e^{2\pi ix\cdot y}\,dx
\\&+\int_{\frac{1}{\beta}\leq|x|\leq\frac{2}{\beta},~|x-z|\leq\frac{1}{\beta}}(\frac{x}{|x|^{n+1-\beta}}\chi_\beta(x)-\frac{x-z}{|x-z|^{n+1-\beta}})e^{2\pi ix\cdot y}\,dx
\\&+\int_{\frac{1}{|y|}\leq|x|<\frac{1}{\beta},~|x-z|\geq\frac{1}{\beta}}(\frac{x}{|x|^{n+1-\beta}}-\frac{x-z}{|x-z|^{n+1-\beta}}\chi_\beta(x-z))e^{2\pi ix\cdot y}\,dx
\\&+\int_{\frac{1}{\beta}\leq|x|\leq\frac{2}{\beta},~|x-z|\geq\frac{1}{\beta}}(\frac{x}{|x|^{n+1-\beta}}\chi_\beta(x)-\frac{x-z}{|x-z|^{n+1-\beta}}\chi_\beta(x-z))e^{2\pi ix\cdot y}\,dx
\\&=I_1+I_2+I_3+I_4.
\end{split}
\end{equation}

We first estimate $I_2.$ Thanks to $|x-z|\geq|x|-|z|\geq\frac{1}{\beta}-\frac{1}{2|y|}\geq\frac{1}{2\beta},$ one has
\begin{equation}\label{I-2}
\begin{split}
|I_2|&\leq\int_{\frac{1}{\beta}\leq|x|\leq\frac{2}{\beta}}\frac{1}{|x|^{n-\beta}}\,dx
+\int_{\frac{1}{2\beta}\leq|x-z|\leq\frac{1}{\beta}}\frac{1}{|x-z|^{n-\beta}}\,dx
\\&\leq C\frac{2^\beta-1}{\beta}\beta^{-\beta}+C\frac{1-2^{-\beta}}{\beta}\beta^{-\beta}.
\end{split}
\end{equation}

Then thanks to  $|x|=|x-z+z|\geq|x-z|-|z|\geq\frac{1}{\beta}-\frac{1}{2|y|}\geq\frac{1}{2\beta},$ $I_3$ is estimated as follows.
\begin{equation}\label{I-3}
\begin{split}
|I_3|&\leq\int_{\frac{1}{2\beta}\leq|x|\leq\frac{1}{\beta}}\frac{1}{|x|^{n-\beta}}\,dx
+\int_{\frac{1}{\beta}\leq|x-z|\leq\frac{2}{\beta}}\frac{1}{|x-z|^{n-\beta}}\,dx
\\&\leq C\frac{1 -2^{-\beta}}{\beta}\beta^{-\beta}+C\frac{2^\beta-1}{\beta}\beta^{-\beta}.
\end{split}
\end{equation}

The term $I_4$ is directly estimated as
\begin{equation}\label{I-4}\begin{split}
|I_4|&\leq\int_{\frac{1}{\beta}\leq|x|\leq\frac{2}{\beta}}\frac{1}{|x|^{n-\beta}}\,dx
+\int_{\frac{1}{\beta}\leq|x-z|\leq\frac{2}{\beta}}\frac{1}{|x-z|^{n-\beta}}\,dx
\\&\leq C\frac{2^\beta-1}{\beta}\beta^{-\beta}.
\end{split}
\end{equation}

Now we deal with $I_1.$  Note that
\begin{equation*}
\partial_i(\frac{x}{|x|^{n+1-\beta}})=\frac{\overrightarrow{e}_i}{|x|^{n+1-\beta}}
+(-n-1+\beta)\frac{xx_i}{|x|^{n+3-\beta}},~ i=1,2,...,n.
\end{equation*}
In this case, since $|x-z|\geq|x|-|z|\geq2|z|-|z|\geq|z|,$ by Taylor expansion, one has
\begin{equation*}\begin{split}
&|\frac{x-z}{|x-z|^{n+1-\beta}}-\frac{x}{|x|^{n+1-\beta}}|\\ \le&
|\sum_{i=1}^n\big(\frac{z_i\overrightarrow{e}_i}{|x-z|^{n+1-\beta}}+
(-n-1+\beta)\frac{(x-z)(x_i-z_i)z_i}{|x-z|^{n+3-\beta}}\big)|+C\sum_{k=2}^\infty \frac{|z|^k}{k!|x-z|^{n+k-\beta}}.
\end{split}
\end{equation*}
Consequently,
\begin{equation}\label{I-1}
\begin{split}
|I_1|&\leq (n+2-\beta)|z|\int_{|z|\leq|x-z|<\frac{1}{\beta}<\infty}\frac{1}{|x-z|^{n+1-\beta}}\,dx+
C\sum_{k=2}^\infty \int_{|z|\leq|x-z|<\frac{1}{\beta}<\infty}\frac{|z|^k}{k!|x-z|^{n+k-\beta}}\,dx
\\&\leq C\frac{|z|^\beta}{1-\beta}\leq C\frac{\beta^{-\beta}}{1-\beta}.
\end{split}
\end{equation}
Substituting \eqref{I-2}-\eqref{I-1} into \eqref{F321-I} yields
\begin{equation}\label{I-E}
\begin{split}
|I|&=|\int_{\frac{1}{|y|}\leq|x|<\frac{1}{\beta},~|x-z|\leq\frac{1}{\beta}}(\frac{x}{|x|^{n+1-\beta}}-\frac{x-z}{|x-z|^{n+1-\beta}})e^{2\pi ix\cdot y}\,dx|\\
&\le  C(\frac{2^\beta-1}{\beta}\beta^{-\beta}+\frac{1-2^{-\beta}}{\beta}\beta^{-\beta}+\frac{\beta^{-\beta}}{1-\beta})
\end{split}
\end{equation}
for some absolute constant $C>0$.

Concerning the term $J$, thanks to $|x|\geq|x+z|-|z|\geq 2|z|-|z|\geq |z|$, one has
\begin{equation}\label{J-E}
\begin{split}
|J|&\leq\int_{|z|\leq|x|\leq2|z|}\frac{1}{|x|^{n-\beta}}\,dx
\\&\leq\frac{1-2^{-\beta}}{\beta}\beta^{-\beta}.
\end{split}
\end{equation}

Concerning the term $K$, thanks to  $|x|\leq|x+z|+|z|\leq 2|z|+|z|\leq 3|z|$, one has
\begin{equation}\label{K-E}
\begin{split}
|K|&\leq\int_{2|z|\leq|x|\leq3|z|}\frac{1}{|x|^{n-\beta}}\,dx
\\&\leq\frac{(\frac32)^{\beta}-1}{\beta}\beta^{-\beta}.
\end{split}
\end{equation}
Concerning the term $L$, thanks to $\frac{2}{\beta}\ge |x|\ge |x+z|-|z|\ge \frac{2}{\beta}-\frac{1}{2\beta}=\frac{3}{2\beta}$, one has
\begin{equation}\label{L-E}
\begin{split}
|L|&\leq\int_{\frac{3}{2\beta}\leq|x|\leq\frac{2}{\beta}}\frac{1}{|x|^{n-\beta}}\,dx
\\&\leq\frac{1}{\beta}[(\frac{3}{2\beta})^\beta-(\frac{2}{\beta})^\beta].
\end{split}
\end{equation}
Substituting \eqref{I-E}-\eqref{L-E} into \eqref{K1-F321}, we obtain that there exists an absolute constant $C>0$ such that
\begin{equation}\label{K12}
|\int_{\frac{1}{|y|}\leq|x|\leq\frac{2}{\beta}} e^{2\pi ix\cdot y}K_1(x)\,dx|\le C.
\end{equation}
In view of \eqref{K1-F311}, \eqref{K12} and \eqref{K1-F31}, there exists an absolute constant $C>0$ such that
\begin{equation}\label{K1-F3}
|\widehat{K_1}(y)|\le C, \ \ 0<\beta<n, |y|>\beta.
\end{equation}
Combining  \eqref{K1-F1}, \eqref{K1-F2} with \eqref{K1-F3}, we finish the proof of \eqref{K1-F}. Applying \eqref{K1-F}, one has
\begin{equation*}\label{T1-1+}\begin{split}
\|T_1f\|_{L^2}=\|\widehat{K_1}\widehat{f}\|_{L^2}\le C\|\widehat{f}\|_{L^2}=C\|f\|_{L^2}.\\
\end{split}
\end{equation*}
 Hence \eqref{T1-1} is proved and the proof of the lemma is complete.
\end{proof}

The following is a Marcinkiewicz interpolation theorem (see \cite{[stein]}).

\begin{lemma}\label{912}
Suppose that $1<r\le\infty$. Suppose that the following holds:

 (1) $T$ is a sub-additive mapping from $L^1(\R^n)+L^r(\R^n)$ to the space of measurable functions on $\R^n$:
 $$
 |T(f+g)(x)|\le |Tf(x)|+|Tg(x)|.
 $$

 (2) $T$ is of weak-type $(1,1)$:
 $$
 m\{x: |Tf(x)|>t\}\le \frac{A_1}{t}\|f\|_1,  f\in L^1(\R^n).
 $$

 (3) $T$ is of weak-type $(r,r)$:
 $$
 m\{x: |Tf(x)|>t\}\le (\frac{A_r}{t}\|f\|_r)^r,  f\in L^r(\R^n)
 $$
 if $r<\infty$ or
 $$
 \|Tf\|_\infty\le A_\infty \|f\|_\infty
 $$
 if $r=\infty$.

 Then $T$ is of type $(p,p)$ for all $1<p<r$, that is,
 $$
 \|Tf\|_p\le A_p\|f\|_p, f\in L^p(\R^n)
 $$
 for all  $1<p<r$, where $A_p$ depends only on $A_1, A_r, p$ and $r$.
 \end{lemma}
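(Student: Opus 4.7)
The plan is to use the classical layer-cake argument with a height-dependent truncation of $f$. For each $t>0$, split $f = f_1^{t} + f_2^{t}$ with
\begin{equation*}
f_1^{t}(x) = f(x)\mathbf{1}_{\{|f|>ct\}}(x), \qquad f_2^{t}(x) = f(x) - f_1^{t}(x),
\end{equation*}
where $c>0$ is to be chosen; then $f_1^{t}\in L^1(\R^n)$ and $f_2^{t}\in L^r(\R^n)\cap L^\infty(\R^n)$, and sub-additivity (hypothesis (1)) yields the inclusion
\begin{equation*}
\{x:|Tf(x)|>2t\}\subseteq \{|Tf_1^{t}|>t\}\cup\{|Tf_2^{t}|>t\}.
\end{equation*}

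Next, starting from the layer-cake identity
\begin{equation*}
\|Tf\|_p^p = p\int_0^\infty s^{p-1} m\{|Tf|>s\}\,ds = p\,2^p\int_0^\infty t^{p-1} m\{|Tf|>2t\}\,dt,
\end{equation*}
I would bound the distribution function by the weak-type hypotheses. In the case $r<\infty$, taking $c=1$, hypothesis (2) gives $m\{|Tf_1^t|>t\}\le A_1\|f_1^t\|_1/t$ and hypothesis (3) gives $m\{|Tf_2^t|>t\}\le A_r^r\|f_2^t\|_r^r/t^r$. For $r=\infty$, taking $c=1/(2A_\infty)$ yields $\|f_2^t\|_\infty\le t/(2A_\infty)$, hence $\|Tf_2^t\|_\infty\le t/2$, so the second set is empty and only the $(1,1)$ contribution survives.

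After substituting these bounds into the layer-cake integral and interchanging the order of integration by Fubini, the two ingredients reduce to the routine computations
\begin{equation*}
\int_0^\infty t^{p-2}\!\int_{\{|f|>t\}}|f(x)|\,dx\,dt=\frac{\|f\|_p^p}{p-1}, \qquad \int_0^\infty t^{p-1-r}\!\int_{\{|f|\le t\}}|f(x)|^r\,dx\,dt=\frac{\|f\|_p^p}{r-p}.
\end{equation*}
Adding these and tracking the constants yields $\|Tf\|_p\le A_p\|f\|_p$ with $A_p = [p\,2^p(A_1/(p-1)+A_r^r/(r-p))]^{1/p}$ for $r<\infty$, and an analogous constant involving $A_1$ and $A_\infty$ in the limit case.

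The main obstacle is not analytical but lies in calibrating the truncation constant $c$ so that the two inner integrals above both converge: convergence of the first identity forces $p>1$ while convergence of the second forces $p<r$, reproducing exactly the admissible range asserted in the statement. Beyond this balancing, the argument is bookkeeping to verify that $A_p$ depends only on $A_1$, $A_r$, $p$, $r$, which is automatic from the explicit formula displayed above.
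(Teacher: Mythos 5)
Your argument is correct, and it is essentially the classical textbook proof of the Marcinkiewicz interpolation theorem (truncate $f$ at height $ct$, use sub-additivity to split the superlevel set of $Tf$, bound each piece by the corresponding weak-type hypothesis, and then Fubini the layer-cake integral). The paper itself does not give a proof of this lemma: immediately after the statement it says ``The proof of \ref{912} is referred to \cite{[stein]} and we omit it here.'' The argument in Stein's book is precisely the one you reproduce, so there is no discrepancy in approach, only in whether the details are written out.

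One small imprecision in your closing remark: the convergence constraints $p>1$ and $p<r$ come from the exponents $p-2$ and $p-1-r$ in the two $t$-integrals and have nothing to do with the choice of $c$; with $r<\infty$ you correctly take $c=1$, and the role of $c$ only matters in the endpoint case $r=\infty$, where $c=1/(2A_\infty)$ is chosen to kill the second term outright. This does not affect the validity of the computation, only the phrasing of the final paragraph.
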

The proof of  \ref{912} is referred to \cite{[stein]} and we omit it here.

\section{Proof of Main Results}
\setcounter{section}{3}\setcounter{equation}{0}

In this section, we give the proof of Theorem \ref{uni-est3} and Theorem \ref{uni-est30}.

\begin{proof}[Proof of Theorem \ref{uni-est3}]

In view of Lemma \ref{uni-est1}, to prove Theorem \ref{uni-est3}, it suffices to prove

\begin{lemma}\label{uni-est32+}
Let $f\in L^1(\R^n)\cap L^q(\R^n)$. Then  there exists a constant $C=C(n)$ independent of $\beta$ such that
\begin{equation*}\label{T1-111}
\|T_1f\|_q\leq C(\|f\|_q+\|f\|_p+\frac{\beta^{\frac{n(q-1)}{q}}}{(n(q-1)-\beta q)^{\frac 1q}}\|f\|_1)
\end{equation*}
for any $1<p\le q<\infty$ satisfying $\frac 1q=\frac 1p-\frac{\beta}{n}$.
\end{lemma}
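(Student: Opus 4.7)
In view of Lemma~\ref{uni-est21}, the natural strategy is a Calderón--Zygmund decomposition of $f$, tuned to the cutoff scale $1/\beta$ built into $K_1$. I would fix a level $\alpha$ and decompose $f=g+b=g+\sum_j b_j$ in the standard way: $\|g\|_\infty\le C\alpha$, $\|g\|_1\le\|f\|_1$, the $Q_j$ are pairwise disjoint dyadic cubes, $\int b_j=0$, $\|b_j\|_1\le C\alpha|Q_j|$, and $\bigl|\bigcup_j Q_j\bigr|\le\|f\|_1/\alpha$.

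For the good part, $\|T_1 g\|_2^2\le C\|g\|_2^2\le C\|g\|_\infty\|g\|_1\le C\alpha\|f\|_1$ by Lemma~\ref{uni-est21}, and Chebyshev gives $m\{|T_1 g|>t/2\}\le C\alpha\|f\|_1/t^2$. For the bad part, set $E=\bigcup_j 2Q_j$, so $|E|\le C\|f\|_1/\alpha$. On $\R^n\setminus E$, the cancellation $\int b_j=0$ lets me write
\[
T_1 b_j(x)=\int_{Q_j}\bigl(K_1(x-y)-K_1(x-y_j)\bigr)b_j(y)\,dy,
\]
so the bad-part control reduces to the H\"ormander-type integral
\[
\mathcal H_j:=\int_{|x-y_j|>2\,\operatorname{diam}Q_j}\bigl|K_1(x-y)-K_1(x-y_j)\bigr|\,dx.
\]
Using the pointwise bound
\[
|\nabla K_1(x)|\le C\bigl(|x|^{-(n+1-\beta)}\mathbf 1_{|x|\le 2/\beta}+\beta\,|x|^{-(n-\beta)}\mathbf 1_{1/\beta\le|x|\le 2/\beta}\bigr),
\]
a direct computation shows that $\mathcal H_j$ is bounded uniformly in $\beta$ as long as $\operatorname{diam}Q_j\lesssim 1/\beta$; these ``small'' cubes contribute the $\|f\|_1$ term together with the $L(\beta)$-type constant of the kind obtained in Lemma~\ref{uni-est1}.

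The main obstacle---and the reason for invoking a \emph{refined} Calderón--Zygmund decomposition---is the family of cubes with $\operatorname{diam}Q_j\gtrsim 1/\beta$: at that scale the derivative bound on $K_1$ is too weak to absorb the cube size into a $\beta$-uniform constant, so the cancellation argument breaks down. My plan for these ``large'' cubes is to discard the cancellation and use instead the pointwise size bound $|T_1 b_j(x)|\le\int_{Q_j}|b_j(y)|/|x-y|^{n-\beta}\,dy$, then sum and apply the $L^p\to L^q$ mapping property of the Riesz potential with $\tfrac1q=\tfrac1p-\tfrac\beta n$ to extract the $\|f\|_p$ contribution recorded in the statement. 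Combining the good-part estimate, the small-cube cancellation bound, and the large-cube Riesz potential bound, optimizing $\alpha\sim t$, and either integrating $m\{|T_1 f|>t\}$ against $qt^{q-1}\,dt$ directly or interpolating with the $L^2$ bound via Marcinkiewicz (Lemma~\ref{912}), one arrives at the desired $\|T_1 f\|_q\le C\bigl(\|f\|_q+\|f\|_p+\tfrac{\beta^{n(q-1)/q}}{(n(q-1)-\beta q)^{1/q}}\|f\|_1\bigr)$.
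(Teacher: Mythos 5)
Your diagnosis of the core difficulty is right: the H\"ormander-type cancellation estimate for the cutoff kernel $K_1$ degenerates once $\operatorname{diam}Q_j$ reaches the cutoff scale $1/\beta$. But your remedy for the large cubes---dropping the cancellation $\int b_j=0$ and bounding $|T_1 b_j(x)|\le\int_{Q_j}|b_j(y)|\,|x-y|^{-(n-\beta)}\,dy$, then invoking the Riesz potential $I_\beta\colon L^p\to L^q$---cannot produce a constant $C=C(n)$ independent of $\beta$. The operator norm of $I_\beta$ on $L^p\to L^q$ with $\tfrac1q=\tfrac1p-\tfrac\beta n$ blows up as $\beta\to0$ (precisely the unbounded constant $C(\beta)$ noted in \eqref{E931}; the standard proof picks up a factor $\sim(1-2^{-\beta})^{-1}\sim\beta^{-1}$ from summing dyadic annuli). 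So the coefficient of $\|f\|_p$ you would obtain from the large-cube term is not $\beta$-uniform, which defeats the whole purpose of the lemma, namely to recover the Riesz transform bound as $\beta\to0$.

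The paper's proof never splits cubes by size. After the Calder\'on--Zygmund decomposition $f=g+b$ (cubes $K_l$, dilated balls $B_l$, $F^*=\cup B_l$, $G^*=\R^n\setminus F^*$), it groups $T_{11}f:=T_1g+T_1b\,\mathrm{I}_{F^*}$ and proves it is of type $(2,2)$ (Lemma~\ref{uni-est21}) and weak type $(1,1)$ uniformly in $\beta$, hence of type $(q,q)$ by Marcinkiewicz and duality---this is where $\|f\|_q$ comes from. For $T_{12}f:=T_1b\,\mathrm{I}_{G^*}$, the decisive step is \emph{not} to abandon cancellation but to switch kernels via $T_1=T-T_2$. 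The full kernel $K(x)=x_j/|x|^{n+1-\beta}$ has no cutoff, so cancellation works for cubes of every size: for $x\notin K_l$, $|Tb_l(x)|\le C\delta_l\int_{K_l}|b_l(y)|\,|x-y|^{-(n+1-\beta)}\,dy$, and Young's inequality with the kernel $\delta_l\,\mathrm{I}_{\{|z|\ge\delta_l/2\}}|z|^{-(n+1-\beta)}$---whose $L^{n/(n-\beta)}$ norm is bounded \emph{independently} of $\delta_l$ and $\beta$ by a clean scaling computation---gives $\|Tb_l\|_{L^q(\R^n\setminus B_l)}\le C\|b_l\|_{L^p(K_l)}$ with a uniform $C$. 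Summing yields the $\|f\|_p$ term, and the leftover $T_2b\,\mathrm{I}_{G^*}$ is absorbed by Lemma~\ref{uni-est1}, which is the sole source of the $\beta^{n(q-1)/q}(n(q-1)-\beta q)^{-1/q}\|f\|_1$ term. The extra power of $|x-y|^{-1}$ bought by cancellation is exactly what makes Young's inequality $\beta$-uniform, so you should not give it up on the large cubes; replace your large-cube step with the $T=T_1+T_2$ splitting, otherwise the plan does not close.
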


Now we prove Lemma \ref{uni-est32+}. Given $t>0$, according to the  cube decomposition procedure, $\R^n$ can be divided into the union of countable and disjoint cubes satisfying

(1) there exists a sequence of parallel subcubes $\{K_l\}_{l=1}^\infty$ such that
\begin{equation}
t<\frac{1}{m(K_l)}\int_{K_l} |f|<2^n t;
\end{equation}

(2) \begin{equation}
|f|\le t, a.e. \ {\rm on}\ G=\R^n\setminus \cup K_l.
\end{equation}

Denote $F=\cup K_l$. Then it follows that that $m(F)\le \frac{\|f\|_1}{t}$. Let $f=g+b$, where $g$ is defined by
\begin{equation}\label{3.5}
g(x)=
\left\{
\begin{array}{ll}
&f(x), x\in G,\\
&\frac{1}{m(K_l)}\int_{K_l} f, x\in K_l, l=1,2,\cdots
\end{array}
\right.
\end{equation}
 and $b=f-g$ satisfies
\begin{equation}\label{3.6}
b(x)=0, x\in G, \quad \int_{K_l} b=0, l=1,2, \cdots.
\end{equation}

It is easy to get
\begin{equation}
\|g\|_q\le \|f\|_q, \quad 1\le q\le \infty.
\end{equation}

 Let  $\delta_l>0$ be the diameter of $K_l$ and $B_l\supset K_l$ be a ball with radius $\delta_l$. Denote $F^*=\cup B_l, G^*=\R^n\setminus F^*$. Then it yields
\begin{equation}\label{916}
m(F^*)\le n^{\frac n2}\omega_nm(F)\le \frac{C(n)\|f\|_1}{t}.
\end{equation}
The operator $T_1$ can be decomposed into
\begin{equation}\label{3.7}
\begin{split}
T_1f=&T_1g+T_1b\mathrm{I}_{G^*}+T_1b\mathrm{I}_{F^*}\\
=&(T_1g+T_1b\mathrm{I}_{F^*})+T_1b\mathrm{I}_{G^*}\\
\equiv&T_{11}f+T_{12}f,
\end{split}
\end{equation}
where $\mathrm{I}_A$ is the characteristic function on a set $A$, that is, $\mathrm{I}_A=1$ for $x\in A$ and $\mathrm{I}_A=0$ for $x\in \R^n\setminus A$.

Thanks to Lemma \ref{uni-est21}, we have
\begin{equation}
\begin{split}
\|T_{11}f\|_2\le & \|T_{11}g\|_2+\|T_{11}b\mathrm{I}_{F^*}\|_2\\
\le & C\|g\|_2+\|b\|_2.
\end{split}
\end{equation}
Direct estimates give
\begin{equation}
\begin{split}
\|g\|_2^2=&\int_G |f|^2 dx +\sum_l\int_{K_l} (\frac{1}{|K_l|}\int_{K_l} f)^2 dx\\
\le& \int_G |f|^2 dx +\sum_l\frac{1}{|K_l|}(\int_{K_l}f dx)^2\\
\le& \int_G |f|^2 dx +\sum_l\int_{K_l} |f|^2 dx=\|f\|_2^2.
\end{split}
\end{equation}
Hence $\|g\|_2\le \|f\|_2$. Combining the fact that $\|b\|_2\le \|f\|_2+\|g\|_2\le 2\|f\|_2$, we obtain
\begin{eqnarray}\label{970-}
\|T_{11}f\|_2\le C\|f\|_2,
\end{eqnarray}
which implies that $T_{11}$ is of type $(2,2)$.
Note that, for any $t>0$,
\begin{equation}\label{970}
\begin{split}
m\{x:|T_{11}f|>t\}&\le m\{x: |T_{11}g|>\frac t2\}+m\{x: |T_{11}b\mathrm{I}_{F^*}|>\frac t2\}.
\end{split}
\end{equation}
Since
\begin{equation*}
\begin{split}
\|g\|_2^2&=\int_{\R^n} |g(x)|^2 dx=\int_F |g(x)|^2 dx+\int_G |g(x)|^2 dx\\
&\le 2^{2n}t^2m(F)+t\int_G |f(x)| dx\\
&\le C(n)t\|f\|_1,
\end{split}
\end{equation*}
we obtain
\begin{equation}\label{971}
\begin{split}
m\{x: |T_{11}g|>\frac t2\}\le \frac{C(n)}{t^2}\|g\|_2^2\le \frac{C(n)}{t}\|f\|_1.
\end{split}
\end{equation}
It follows from \eqref{916} that
\begin{equation} \label{972}
m\{x: |T_{11}b\mathrm{I}_{F^*}|>\frac t2\}\le m(F^*)\le \frac{C(n)\|f\|_1}{t}.
\end{equation}
Substitute \eqref{971} and \eqref{972} into \eqref{970} to yield
\begin{equation}\label{973}
\begin{split}
m\{x:|T_{11}f|>t\}&\le \frac{C(n)\|f\|_1}{t},
\end{split}
\end{equation}
which implies that $T_{11}$ is of weak-type $(1,1)$.  Therefore, due to \eqref{970-} and \eqref{973}, by  Lemma \ref{912} and duality method, we obtain
\begin{eqnarray}\label{992}
\|T_{11}f\|_p\le C \|f\|_p, 1<p<\infty.
\end{eqnarray}

Now we estimate $T_{12}$.
Define
$$
b_l=
\left\{
\begin{array}{ll}
b, &x\in K_l,\\
0, & x\notin K_l.
\end{array}
\right.
$$
Then $b=\sum_{l=1}^\infty b_l$. Noticing that, for any $x\in \R^n\backslash K_l$,
\begin{equation}
\begin{split}
|Tb_l|=&|\int_{K_l} (\frac{x_j-y_j}{|x-y|^{n+1-\beta}}-\frac{x_j-\bar y_j}{|x-\bar y|^{n+1-\beta}}) b_l dy |\\
\le& C\delta_l\int_{K_l}\frac{1}{|x-y|^{n+1-\beta}}|b_l(y)| dy\\
=&C\delta_l\int_{R^n}\frac{|b_l(y)|\mathrm{I}_{K_l}(y)}{|x-y|^{n+1-\beta}} dy,
\end{split}
\end{equation}
where $\bar y$ is the center  and $\delta_l$ is the diameter of  the cube $K_l$ respectively,
we obtain
\begin{equation}\label{3.17}
\begin{split}
(\int_{\R^n\backslash B_l} |Tb_l|^q dx)^{\frac1q}\le &C\delta_l(\int_{\R^n\backslash B_l}|\int_{\R^n} \frac{|b_l(y)|\mathrm{I}_{K_l}(y)}{|x-y|^{n+1-\beta}} dy|^q dx)^{\frac1q}\\
\le &C\delta_l (\int_{\R^n}\mathrm{I}_{\R^n\backslash B_l}(x)|\int_{\R^n} \frac{|b_l(y)|\mathrm{I}_{K_l}(y)}{|x-y|^{n+1-\beta}} dy|^q dx)^{\frac1q}\\
\le& C\delta_l(\int_{\R^n}|\int_{\R^n}\frac{\mathrm{I}_{\{|x-y|\ge\frac{\delta_l}{2}\}}|b_l(y)|}{|x-y|^{n+1-\beta}} dy|^q dx)^{\frac1q}\\
\le& C\|b_l\|_{p;K_l},
\end{split}
\end{equation}
where $1\le p,q<\infty$ satisfying $\frac 1q=\frac 1p-\frac \beta n$ and the Young inequality has been used in the last inequality.

Moreover, it holds that
\begin{equation*}
\begin{split}
\|Tb\mathrm{I}_{G^*}\|^q_q\le &\sum_l\|Tb_l\|^q_{q;\R^n\backslash B_l}\le C\sum_l\|b_l\|^q_{p;K_l}\le C(\sum_l\|b_l\|^p_{p;K_l})^{\frac qp}\\
=& C\|b\|^q_{p;F}\le C\|f\|_p^q
\end{split}
\end{equation*}
for any $1<p\le q<\infty$ satisfying $\frac 1q=\frac 1p-\frac{\beta}{n}$. It follows that $\|Tb\mathrm{I}_{G^*}\|_q\le C\|f\|_p$ and
\begin{equation*}
\begin{split}
\|T_{12}f\|_q&=\|T_1b\mathrm{I}_{G^*}\|_q\\
&\le \|Tb\mathrm{I}_{G^*}\|_q+\|T_2b\mathrm{I}_{G^*}\|_q\\
&\le C(\|f\|_p+\frac{\beta^{\frac{n(q-1)}{q}}}{(n(q-1)-\beta q)^{\frac 1q}}\|f\|_1),
\end{split}
\end{equation*}
where $1<p\le q<\infty$ satisfying $\frac 1q=\frac 1p-\frac{\beta}{n}$. Lemma \ref{uni-est32+} is then proved and the proof of Theorem \ref{uni-est3} is finished.
\end{proof}

In the end, we prove Theorem \ref{uni-est30} as follows.

\begin{proof}[ Proof of Theorem \ref{uni-est30}]
According to \eqref{3.7}, the operator $T_1$ can be decomposed into
\begin{equation}\label{3.71}
\begin{split}
T_1f=&T_1g+T_1b\mathrm{I}_{G^*}+T_1b\mathrm{I}_{F^*}\\
=&(T_1g+T_1b\mathrm{I}_{F^*})+T_1b\mathrm{I}_{G^*}\\
\equiv&T_{11}f+T_{12}f,
\end{split}
\end{equation}
Thanks to \eqref{973}, one has
\begin{equation}\label{9731}
\begin{split}
m\{x:|T_{11}f|>t\}&\le \frac{C(n)\|f\|_1}{t},
\end{split}
\end{equation}
Concerning $T_{12}$, we take $p=1$ in \eqref{3.17} to obtain
\begin{equation}
\begin{split}
\|Tb\mathrm{I}_{G^*}\|_q\le&\sum_l\|Tb_l\|_{q;\R^n\backslash B_l}\\
\le & C\sum_l\|b_l\|_{1;K_l}\\
=& C\|b\|_{1;F}\le C\|f\|_1
\end{split}
\end{equation}
for $q=\frac{n}{n-\beta}$. Then, in view of Lemma \ref{uni-est1},
$$
\|T_1b\mathrm{I}_{G^*}\|_q\le \|Tb\mathrm{I}_{G^*}\|_q+\|T_2b\mathrm{I}_{G^*}\|_q\le C\|f\|_1,
$$
 which implies that $T_{12}$ is of type  $(1,q)$, where $\frac 1q=1-\frac \beta n=\frac{n-\beta}{n}$.
 It concludes that, for any $t>0$,
 \begin{eqnarray*}
 m\{x:|T_{1}f|>t\}&\le m\{x: |T_{11}f|>\frac t2\}+m\{x: |T_{12}f|>\frac t2\}\\
 &\le C(\frac{\|f\|_1}{t}+\frac{\|f\|^q_1}{t^q}).
 \end{eqnarray*}
The proof of Theorem \ref{uni-est30} is proved.
\end{proof}

{\bf Acknowledgements.}
 Jiu is partially supported  by the National Natural Science Foundation of China
(No.11671273).

\end{document}